\let\Horig\H
\def\reals{{\mathbb{R}}}
\def\H{{\cal H}}
\newtheorem{theorem}{Theorem}[section]
\newtheorem{lemma}[theorem]{Lemma}
\def\reals{{\mathbb R}}
\begin{document}
\title{Covering points by hyperplanes and related problems\thanks{A conference version of this paper appeared in the Proceedings of the 38th Symposium on Computational Geometry (SoCG 2022): {\bf 57}, 1--7.}
\thanks{%
Work by Z.P. partially supported by GA\v {C}R grant 22-19073S and by Charles University projects PRIMUS/21/SCI/014 and UNCE/SCI/022.  Work by M.S. partially supported by ISF grant~260/18.}
  }
  
\author{Zuzana Pat\'akov\'a\thanks{%
  Department of Algebra, Faculty of Mathematics and Physics, Charles University, Sokolovská~83, 186 75 Praha, Czech Republic; E-mail: {\sf patakova@karlin.mff.cuni.cz}.}\and%
   Micha Sharir\thanks{%
  School of Computer Science, Tel Aviv University, Tel~Aviv 69978, Israel; E-mail: {\sf michas@tau.ac.il}.}
}

\date{}
\maketitle
\begin{abstract}
For a set $P$ of $n$ points in~$\reals^d$, for any $d\ge 2$, a hyperplane $h$ is called 
\emph{$k$-rich} with respect to $P$ if it contains at least $k$ points of $P$. Answering 
and generalizing a question asked by Peyman Afshani, we show that if the number of $k$-rich 
hyperplanes in $\reals^d$, $d \geq 3$, is at least $\Omega(n^d/k^\alpha + n/k)$, with a sufficiently large constant of 
proportionality and with $d\le \alpha < 2d-1$, then there exists a $(d-2)$-flat that contains 
$\Omega(k^{(2d-1-\alpha)/(d-1)})$ points of $P$. We also present upper bound constructions
that give instances in which the above lower bound is tight. 
An extension of our analysis yields similar lower bounds for $k$-rich spheres or $k$-rich flats. 
\end{abstract}

\section{Introduction} 

Let $P$ be a set of $n$ points in~$\reals^d$. A hyperplane $h$ is called \emph{$k$-rich} 
with respect to $P$ if it contains at least $k$ points of $P$. 
Assume that the number of $k$-rich hyperplanes is at least $\Omega(n^d/k^{d+1} + n/k)$,
with a sufficiently large constant of proportionality. Is there a lower-dimensional flat 
containing ``a lot of points'' of $P$? This question was raised by Peyman Afshani (personal 
communication), motivated by his recent work \cite{ABDN} on point covering problems.
We answer Afshani's problem in the affirmative, in the following stronger form.
\begin{theorem} \label{thm:main}
Let $d \geq 3, k \geq d$ be integers, and $d\leq\alpha < 2d-1$.
Let $P$ be a set of $n$ points in~$\reals^d$, for which the number of 
$k$-rich hyperplanes is at least $c(n^d/k^{\alpha} + n/k)$, for some sufficiently 
large constant $c$ (depending only on $d$). Then there exists a $(d-2)$-flat 
that contains $\Omega\left(k^{(2d-1-\alpha)/(d-1)}\right)$ points of $P$.
\end{theorem}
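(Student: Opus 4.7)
The plan is to establish the following contrapositive form of the theorem and then invert it: \emph{if every $(d-2)$-flat of $\reals^d$ contains at most $t$ points of $P$, then the number $M$ of $k$-rich hyperplanes satisfies $M = O_d(n^d t^{d-1}/k^{2d-1} + n/k)$.} Plugging this upper bound into the theorem's hypothesis $M \ge c(n^d/k^{\alpha} + n/k)$, for $c$ sufficiently large relative to the implicit constant, forces $t^{d-1} \gtrsim k^{2d-1-\alpha}$, which produces a $(d-2)$-flat containing $\Omega(k^{(2d-1-\alpha)/(d-1)})$ points of $P$, as claimed.

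I would prove the contrapositive bound by induction on $d$. The base case $d=3$ is an Elekes--T\'oth-style incidence bound for points and planes in $\reals^3$ under the assumption that no line contains more than $t$ points of $P$; it can be derived from the Szemer\'edi--Trotter theorem applied inside each $k$-rich plane combined with a global double-counting argument over planes. For the inductive step $d-1 \to d$, the natural device is projection from a carefully chosen point $p_0 \in P$ onto a generic hyperplane $\Pi \cong \reals^{d-1}$. Under this projection, each $k$-rich hyperplane through $p_0$ becomes a $(k-1)$-rich hyperplane in $\Pi$, and each $(d-2)$-flat through $p_0$ carrying at most $t$ points maps to a $(d-3)$-flat in $\Pi$ carrying at most $t$ projected points. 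Consequently the inductive hypothesis in dimension $d-1$ governs the number $m_{p_0}$ of $k$-rich hyperplanes through $p_0$, with the degeneracy parameter inherited unchanged.

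The main obstacle is that the straightforward averaging $Mk \le \sum_{p \in P} m_p$ combined with the per-point inductive bound yields only $M = O(n^d t^{d-2}/k^{2d-2} + n^2/k^2)$, which falls short of the target by a factor of $k/t$ in the leading term. This slack corresponds to effectively losing $1$ from the exponent $\alpha$ at each recursion, and the naive induction matches the target only at the endpoint $\alpha = d$. Closing the gap for the whole range $\alpha \in [d, 2d-1)$ will require a finer tool: my expectation is a polynomial-partitioning argument carried out directly in $\reals^d$, where the cellular contribution is controlled by the inductive hypothesis in $\reals^{d-1}$ applied cell by cell, and the variety (zero-set) contribution is handled by a separate incidence bound on the $(d-1)$-dimensional variety, transferring the no-large-$(d-2)$-flat hypothesis to its irreducible components. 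The heart of the proof---and the step I expect to be most delicate---is precisely this interplay between the cellular and variety contributions that recovers the missing factor of $t/k$.
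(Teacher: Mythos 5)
Your reduction of the theorem to the contrapositive bound $M = O_d(n^d t^{d-1}/k^{2d-1} + n/k)$ is exactly right---this is precisely the inequality (equation (\ref{eq:mup:ddim})) that the paper establishes, and the final step of comparing it with the hypothesis $M \ge c(n^d/k^\alpha + n/k)$ to extract $t = \Omega(k^{(2d-1-\alpha)/(d-1)})$ is carried out the same way. (Two small points you gloss over: one must first dispose of the case where some $(d-2)$-flat already contains $k$ points, both because the theorem is then trivial and because this is what makes the set of $k$-rich hyperplanes finite; and the intermediate bound is really a dichotomy, ``either $t = \Omega(k)$ or $M = O(\cdot)$,'' since a term $Mt$ also appears in the incidence count.) However, the heart of the matter---actually proving the contrapositive bound---is left open in your proposal. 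You correctly diagnose that induction on dimension via projection from a point loses a factor of $k/t$ in the leading term and only matches the target at $\alpha = d$, and you then defer the entire range $\alpha \in [d, 2d-1)$ to an unspecified polynomial-partitioning argument whose ``interplay between the cellular and variety contributions'' you describe only as an expectation. As it stands, this is a genuine gap: the quantitative core of the theorem is not proved.

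For comparison, the paper's route to the intermediate bound is both more elementary and non-inductive, and you may find it simpler than what you anticipate. One passes to the dual, obtaining $m$ dual points (the rich hyperplanes) and $n$ dual hyperplanes (the points of $P$), with the degeneracy hypothesis becoming ``no line lies in more than $\ell$ dual hyperplanes.'' Matou\v{s}ek's simplicial partition theorem (Theorem \ref{t:cuttings}) with a parameter $r$ splits the dual points into $O(r)$ cells of size $O(m/r)$ such that each dual hyperplane crosses only $O(r^{1-1/d})$ cells; inside each cell the K\Horig ov\'ari--S\'os--Tur\'an bound (the incidence graph is $K_{\ell+1,2}$-free, Lemma \ref{l:simple_bound}) controls the crossing incidences, Cauchy--Schwarz aggregates them to $O(m(\ell n)^{1/2} r^{-1/(2d)} + nr^{1-1/d})$, and the dual hyperplanes that fully contain a partition simplex contribute at most $\ell m$ further incidences. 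Comparing with the trivial lower bound $mk \le I(P,H)$ and optimizing $r = (\ell m^2/n)^{d/(2d-1)}$ yields the bound. No induction on $d$, no polynomial partitioning, and no analysis of incidences on varieties is needed; the single space decomposition in the dual already recovers the full strength of the exponent $2d-1$ across the whole range of $\alpha$. If you wish to salvage your outline, replacing the inductive step by this dual partition argument is the missing ingredient.
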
 
For $\alpha = d+1$, as in Afshani's original question, the lower bound is $\Omega(k^{(d-2)/(d-1)})$.
We also present two upper bound constructions that give instances of the problem
in which the bound in Theorem~\ref{thm:main} is tight. The first instance involves
$\alpha = d+1$ (as in Afshani's original question) and certain values of $k$,
and in the second instance we have $\alpha = d = 3$.

We also extend our analysis to the case of $k$-rich spheres  or $k$-rich flats (a sphere/flat is $k$-rich if it contains at least $k$ points of $P$). In particular, we show (see Theorem~\ref{thm:spheres}) that if the number of
$k$-rich $(d-1)$-spheres is at least $c(n^{d+1}/k^{\alpha} + n/k)$, for $d+1\leq\alpha < 2d+1$ 
and for some sufficiently large constant $c$, then there exists a $(d-2)$-sphere 
that contains $\Omega\left(k^{(2d+1-\alpha)/d}\right)$ points of $P$.  Similarly we show (see Theorem \ref{t:flats}) that if the number of $k$-rich $(t-1)$-flats is at least $c(n^t/k^{\alpha} + n/k)$, for $d \geq t\geq 3, k \geq t, t \leq \alpha < 2t-1$, and for some sufficiently large constant $c$, then there exists a $(t-2)$-flat containing $\Omega\left(k^{(2t-1-\alpha)/(t-1)}\right)$ points of $P$.

The result is interesting by itself, but it may also  find a potential application in the so called \emph{hyperplane cover} problem, one of the classical problems in computational geometry: given a set $S$ of $n$ points in~$\reals^d$ and a number $h$, can we find $h$ hyperplanes that cover all points of $S$? It is a geometric variant of a set cover problem, and it was shown that already for $d=2$ the hyperplane cover problem is both NP-hard \cite{MT} and APX-hard \cite{KAR}. 
However, several FPT-algorithms (in the fixed parameter $h$) are known, best of which is \cite{WLCh}. In the special cases $d=2$ and $d=3$ it has been further improved \cite{ABDN}.
The improvement is based on incidence bounds and builds on a simple observation that given a hyperplane cover of cardinality $h$, some of the hyperplanes might contain many more points than the others. The main idea is to deal with such hyperplanes first and the performance of the algorithm depends on the number of such hyperplanes. 
 For example, it follows from the Szemerédi-Trotter theorem that there are at most $O(n^d/k^3)$ $k$-rich hyperplanes defined by $n$ points in~$\reals^d$.  However, in the approach of \cite{ABDN} this bound  turned out to be useful only in the plane, in which case the exponent of~$n$ is strictly smaller than the exponent of~$k$. The 3-dimensional case is  treated  using another incidence bound \cite{ET}, but this approach also does not extend to higher dimensions \cite{ABDN}. What could help for $d \geq 4$ is to show that if there are too many rich hyperplanes the points cannot be distributed arbitrarily, in fact,  many of them must lie on a common lower dimensional flat.
The results of our paper address this issue.

The problem is also closely related to the problem of bounding the number of incidences 
between $n$ points and $m$ hyperplanes, and we will indeed use tools from incidence theory 
to tackle this problem. A major hurdle in obtaining sharp point-hyperplane incidence bounds, 
in $d\ge 3$ dimensions, is the possibility that there exists a $(d-2)$-flat that contains 
many of the points and is contained in many of the hyperplanes. In the worst case all the 
$n$ points could be contained in such a flat, and all the $m$ hyperplanes could contain 
the flat, and then the number of incidences would be $nm$, the largest possible value. 
To obtain sharper bounds one usually needs to require that no $(d-2)$-flat contains 
too many points, or is not contained in too many hyperplanes, or to impose other 
restrictions on the setup. See \cite{AA,ApS,BK,EGS,ET,Rud,SiSu} 
for a sample of earlier works on this topic. For example, better bounds can be obtained
if the points are restricted to be vertices of the arrangement of the hyperplanes~\cite{AA},
or when the incidence graph between the points and hyperplanes does not contain a
complete bipartite subgraph of some small size (see~\cite{BK}). Improved bounds can also
be obtained by assuming that no lower-dimensional flat is contained in too many hyperplanes, 
or does not contain too many points~\cite{EGS}.
Some of these works also derive lower bounds,
but for different quantities, which do not seem directly related to the setup
considered in this paper. See, for example, Apfelbaum and Sharir~\cite{ApS} and
Brass and Knauer~\cite{BK}
for lower bounds on the maximum size of a complete bipartite
subgraph in the incidence graph of points and hyperplanes.

\section{Proof of Theorem \ref{thm:main}} \label{sec:proof}

Let $P$ be a set of $n$ points in~$\reals^d$ that has many
$k$-rich hyperplanes, in the sense of Theorem~\ref{thm:main}, and let $\ell$ denote the 
maximum number of points of $P$ contained in any $(d-2)$-flat. We seek a lower bound on $\ell$.

\paragraph*{Overview of the proof} Before we dive into the details, we describe the overall idea first. Let $H$ be the set of all $k$-rich hyperplanes spanned by $P$. By a simple argument we show that $H$ is finite and then we establish a lower and an upper bound on the number of incidences between $P$ and $H$. Comparing these bounds yields the desired result. As the lower bound on the number of incidences is trivially $k|H|$, the actual work here is to obtain a reasonable upper bound---for that we use simplicial partitions (Theorem \ref{t:cuttings}), point-hyperplane duality, and the Cauchy-Schwartz inequality.

We start with a simple incidence bound.

\begin{lemma}\label{l:simple_bound}
Let $P$ and $H$ be finite sets of points and hyperplanes in $\reals^d$, respec-tively, such that no $(d-2)$-flat contains more than $\ell$ points of $P$.
We have the following simple bound on the number $I(P,H)$ of incidences
between the points of $P$ and the hyperplanes of $H$.
\begin{equation}
  I(P,H) = O\left(|H||P|^{1/2}\ell^{1/2} + |P|\right). \label{eq:simple_bound_l} 
\end{equation}
\end{lemma}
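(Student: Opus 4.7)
The plan is to use the classical double-counting / Cauchy--Schwarz argument applied to pairs of hyperplanes. For each $p \in P$, let $r(p)$ denote the number of hyperplanes of $H$ that pass through $p$, so $I(P,H) = \sum_{p \in P} r(p)$. I would study the quantity
\[
N := \sum_{p \in P} \binom{r(p)}{2},
\]
which counts triples $(p,\{h_1,h_2\})$ with $p \in h_1 \cap h_2$ and $h_1 \ne h_2$ in $H$.

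The crucial geometric input is that for any two distinct hyperplanes $h_1,h_2 \in \reals^d$, their intersection $h_1 \cap h_2$ is either empty or a $(d-2)$-flat; in either case it contains at most $\ell$ points of $P$, by the hypothesis. Counting $N$ by summing over unordered pairs of hyperplanes therefore gives
\[
N \le \binom{|H|}{2}\ell \le \tfrac{1}{2}|H|^2 \ell.
\]

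On the other hand, by convexity (equivalently Cauchy--Schwarz applied to the vector $(r(p))_{p \in P}$),
\[
N = \sum_{p \in P}\binom{r(p)}{2} \ge |P|\binom{I/|P|}{2} = \frac{I\,(I-|P|)}{2|P|},
\]
where $I = I(P,H)$. Combining these two inequalities yields $I(I-|P|) \le |P||H|^2 \ell$.

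Finally, I would split into cases to extract the claimed bound. If $I \le 2|P|$ then $I = O(|P|)$ and we are done. Otherwise $I - |P| \ge I/2$, so $I^2 \le 2|P||H|^2\ell$, giving $I = O\bigl(|H||P|^{1/2}\ell^{1/2}\bigr)$. In either case,
\[
I(P,H) = O\bigl(|H||P|^{1/2}\ell^{1/2} + |P|\bigr),
\]
as required. There is no real obstacle here: the only subtlety is the geometric fact that two distinct hyperplanes meet in a flat of dimension exactly $d-2$ (or not at all), which is what allows the $\ell$ bound to be applied pairwise; everything else is the standard convexity trick, and the argument does not rely on the Szemer\'edi--Trotter machinery or on simplicial partitions.
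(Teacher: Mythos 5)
Your proof is correct and is essentially the paper's argument: the paper makes the same geometric observation (two distinct hyperplanes meet in at most a $(d-2)$-flat, hence share at most $\ell$ points of $P$, so the incidence graph is $K_{\ell+1,2}$-free) and then cites the K\Horig ov\'ari--S\'os--Tur\'an theorem, whereas you simply inline the standard double-counting/convexity proof of that theorem. No gaps.
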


\begin{proof}
This is a simple geometric application of the well known K\Horig ov\'ari-S\'os-Tur\'an 
Theorem (see, e.g.,~\cite{PA,KST}), which says that a $K_{t,2}$-free bipartite graph 
with $n$ left and $m$ right vertices has at most $O(mn^{1/2}t^{1/2} + n)$ edges. 
The proof is based on the observation that the incidence graph between $P$ and $H$ 
does not contain $K_{\ell+1,2}$ as a subgraph. Indeed, any pair of non-parallel hyperplanes
from $H$ intersect in a $(d-2)$-flat, which, by assumption, contains at most $\ell$ points of $P$.
\end{proof}

\paragraph*{Using simplicial partitions}

We now proceed to sharpen the upper bound in Lemma~\ref{l:simple_bound}.
We recall the following result, due to Matou\v{s}ek~\cite{Ma:ept}.
\begin{theorem}\label{t:cuttings}
Let $Q$ be a set of $m$ points in~$\reals^d$, for any $d\ge 2$, and let $1 < r \leq m$ be a given
parameter. Then $Q$ can be partitioned into $q \leq 2r$ subsets, $Q_1, \ldots, Q_q$, 
so that, for each $i$, $m/(2r) \leq |Q_i| \leq m/r$, and $Q_i$ is contained in the relative 
interior of a (possibly lower-dimensional) simplex $\Delta_i$, so that every hyperplane 
crosses (i.e., intersects but does not contain) at most $O(r^{1-1/d})$ of these simplices. 
\end{theorem}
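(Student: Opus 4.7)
The plan is to follow the classical cutting-plus-reweighting scheme behind simplicial partitions. The first building block is a \emph{good-simplex lemma}: for any finite set $H$ of hyperplanes with positive weights $w$, there exists a (possibly lower-dimensional) simplex $\Delta\subseteq\reals^d$ with $|Q\cap\Delta|\geq m/r$ whose interior is crossed by a total hyperplane weight of at most $O(w(H)/r^{1/d})$. To prove it, apply a standard weighted $(1/s)$-cutting of $H$ with $s=r^{1/d}$: this partitions $\reals^d$ into $O(s^d)=O(r)$ closed simplices, each crossed by hyperplanes of total weight at most $w(H)/s$. Pigeonhole gives a cell with at least $m/O(r)$ points of $Q$, and a bounded simplicial refinement yields one with $\Theta(m/r)$. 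When $Q$ lies on a proper affine flat, run the argument inside that flat to obtain the advertised lower-dimensional simplex.

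Next I would build the partition greedily. Fix a finite test set $H_0$ of hyperplanes, chosen rich enough that every combinatorially distinct ``crossing pattern'' on the vertices of any simplex we might produce is witnessed (for instance, all hyperplanes spanned by $d$-tuples from $Q$), and give each member weight~$1$. At iteration $i$ apply the good-simplex lemma to the uncovered point set $Q'\subseteq Q$ and to the currently weighted $H_0$, obtaining $\Delta_i$ and $Q_i:=Q'\cap\Delta_i$ with $|Q_i|\geq |Q'|/r$; then double the weight of every hyperplane of $H_0$ that crosses $\Delta_i$. Halt once $|Q'|<m/(2r)$ and absorb the residual points into existing parts, each of which can receive up to $m/(2r)$ extra points without exceeding the upper bound $|Q_i|\leq m/r$. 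This produces $q\leq 2r$ parts satisfying $m/(2r)\leq |Q_i|\leq m/r$.

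The crucial accounting is a potential argument bounding the crossing number. The total weight $W_i:=w_i(H_0)$ grows by at most a factor $(1+O(r^{-1/d}))$ per iteration, so after $O(r)$ iterations $W_q\leq |H_0|\,e^{O(r^{1-1/d})}$. If some hyperplane $h\in H_0$ crossed more than $Cr^{1-1/d}$ simplices, its final weight would be at least $2^{Cr^{1-1/d}}$, exceeding $W_q$ for $C$ large; hence no hyperplane of $H_0$ crosses more than $O(r^{1-1/d})$ simplices. The main obstacle I anticipate is upgrading the crossing bound from the finite test set $H_0$ to \emph{every} hyperplane of $\reals^d$. Because the simplices $\Delta_i$ are fixed once constructed, an arbitrary hyperplane crosses $\Delta_i$ iff it strictly separates two of its vertices, so the richness of $H_0$ ensures that every such crossing pattern is already charged to some $h\in H_0$ of equal or larger weight, which transfers the bound uniformly. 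Handling degenerate position and the possibility of lower-dimensional $\Delta_i$ inside this transfer is routine but fiddly.
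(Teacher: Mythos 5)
A preliminary remark: the paper does not prove this statement at all; it is Matou\v{s}ek's simplicial partition theorem, quoted from \cite{Ma:ept}, so there is no internal proof to compare against. Your outline does follow the strategy of Matou\v{s}ek's original argument (iterated weighted cuttings plus multiplicative reweighting of a test set), but as written it has two genuine gaps. The first is the iteration accounting. You extract $|Q_i|\ge |Q'|/r$ points of the \emph{current} residual set $Q'$ and halt when $|Q'|<m/(2r)$; since $|Q'|$ then decays like $m(1-1/r)^t$, this takes $\Theta(r\log r)$ iterations, not $O(r)$. That already breaks $q\le 2r$, and it inflates the weight bound to $W_q\le |H_0|\exp\bigl(O(r^{1-1/d}\log r)\bigr)$, so your potential argument only yields a crossing number of $O(r^{1-1/d}\log r+\log|H_0|)$. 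The fix (which is what Matou\v{s}ek actually does) is to peel off classes of size $\Theta(m/r)$ measured against the \emph{original} $m$, rescaling the cutting parameter at step $i$ to $s_i\approx(2rn_i/m)^{1/d}$ where $n_i$ points remain; then there are at most $2r$ steps and the per-step weight increases sum to $\sum_{j\le 2r} j^{-1/d}=O(r^{1-1/d})$. Note also that the $\log|H_0|$ term you dropped is not negligible: with all hyperplanes spanned by $d$-tuples of $Q$ as the test set, $\log|H_0|=\Theta(d\log n)$, which dominates $r^{1-1/d}$ for small $r$; the sharp theorem requires a test set of size polynomial in $r$ alone.

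The second and more serious gap is the transfer from the test set to arbitrary hyperplanes, which is the crux of the theorem. The vertices of the simplices $\Delta_i$ are vertices of cutting cells, not points of $Q$, so hyperplanes spanned by $d$-tuples of $Q$ do not realize ``every crossing pattern on the vertices of any simplex we might produce''; moreover $H_0$ must be fixed before the construction begins, so it cannot be chosen with reference to those vertices, and the weights play no role in this step (a test hyperplane with the same crossing pattern as $h$ certifies a bound on $h$ regardless of its weight). What is actually needed is a separate \emph{test set lemma}: there is a set $H_0$ of $O(r)$ hyperplanes, constructed from $Q$ and $r$ alone, such that for any simplicial partition all of whose classes contain at least $m/(2r)$ points, the maximum crossing number over all hyperplanes exceeds the maximum over $H_0$ by only $O(r^{1-1/d})$; its proof exploits the largeness of the classes, not combinatorial equivalence of sign patterns. (A minor further slip: ``$h$ crosses $\Delta_i$ iff it strictly separates two vertices'' fails for a hyperplane meeting $\Delta_i$ only at a vertex.) Without some version of this lemma, your argument bounds the crossing number only for hyperplanes of $H_0$.
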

The partition in Theorem~\ref{t:cuttings} is referred to as a \emph{simplicial partition} of $Q$.
We remark that the theorem guarantees that none of the simplices is a single point when $r \leq m/4$. 
This result has more recently been refined by Chan~\cite{Chan}, but the original version
suffices for our purpose.


\begin{proof}[Proof of Theorem \ref{thm:main}]
First note that if there is a $(d-2)$-flat containing at least $k$ points of $P$, 
the theorem trivially holds, as we then have $\ell \geq k \geq k^{(2d-1-\alpha)/(d-1)}$,
since $\alpha \geq d$. Hence we can assume that each $(d-2)$-flat contains at 
most $k-1$ points of $P$. This guarantees that the number of all $k$-rich hyperplanes 
(with respect to $P$) is finite, as every $k$-tuple of points of $P$ spans at most one 
$k$-rich hyperplane. 

Let then $H$ be the finite set of all $k$-rich hyperplanes, $k \geq d$, set $m := |H|$, 
and recall that we assume that $m = |H| \ge c\left( n^d/k^\alpha + n/k \right)$, for some 
sufficiently large constant $c$ (that depends on $d$) and for some $d\le \alpha < 2d-1$.

Our strategy is to derive an upper bound on the number of incidences between the points
of $P$ and the hyperplanes of $H$, and combine it with the obvious lower bound $mk$ on
this number, which follows since each of these hyperplanes is $k$-rich. The combination
of these bounds will lead to the desired lower bound on $\ell$.

We apply standard geometric duality in $\reals^d$ and get a set $H^*$ of $m$ dual points 
and a set $P^*$ of $n$ dual hyperplanes. The dual version of the fact that no $(d-2)$-flat contains 
more than $\ell$ points of $P$ is that no line is contained in more than $\ell$ hyperplanes
of $P^*$. We also know, as just mentioned, that $I(P,H) \geq mk$, as each primal hyperplane in $H$ contains
at least $k$ points of~$P$.

We fix some $r$, which we determine later, and apply Theorem \ref{t:cuttings} in the
dual setting. We obtain $q\le 2r$ subsets $H_1^*,\ldots,H_q^*$, so that $m/(2r)\le |H_i^*|\le m/r$ 
for each $i=1,\ldots,q$, and each hyperplane crosses $O(r^{1-1/d})$ of the corresponding
simplices. Denote also by $P_i^*$ the set of dual hyperplanes that cross the $i$-th simplex
$\Delta_i \supset H_i^*$, for each $i$. Let $P_i$ and $H_i$ denote the corresponding sets 
of points and hyperplanes in the primal space.

The number of incidences of dual points inside the partition cells and dual hyperplanes 
crossing the corresponding simplices can be bounded as follows:
\begin{align} \label{eq:interior_incid:ddim}
\sum_{i=1}^q I(H_i^*,P_i^*) & = \sum_{i=1}^q I(P_i,H_i) = 
O\left(\sum_{i=1}^q |P_i|^{1/2}|H_i|\ell^{1/2} + \sum_{i=1}^q |P_i| \right) \\
& = O\left( m(\ell n)^{1/2}r^{-1/(2d)} + nr^{1-1/d} \right) \nonumber .
\end{align}
The first inequality follows by applying the bound (\ref{eq:simple_bound_l}) of
Lemma \ref{l:simple_bound} in the primal.
For the second inequality we use the property that each dual hyperplane crosses at most 
$O(r^{1-1/d})$ cells, so we have, using the Cauchy-Schwarz inequality, and recalling that $q\le 2r$,
\begin{eqnarray*}
|H_i| = |H_i^*| \leq \frac{m}{r}, \qquad \sum |P_i| = \sum |P_i^*|= O(r^{1-1/d}n), \qquad\text{and} \\
\sum_{i=1}^q |P_i|^{1/2} \le \left( \sum_{i=1}^q |P_i| \right)^{1/2}(2r)^{1/2} = 
O(n^{1/2}r^{(2d-1)/(2d)}), \label{eq:inside_incid}
\end{eqnarray*}
and the second inequality follows.

It remains to count the incidences between points in a cell (simplex) and hyper-planes that contain the simplex. Any such simplex $\sigma$ is $j$-dimensional, for some $1\le j \le d-1$ 
(zero-dimensional simplices do not arise when $r \leq m/4$).
When $j=d-1$, each such $\sigma$ is contained in at most one hyperplane of $P^*$, 
contributing in total at most $m'$ incidences, where $m'$ is the number of dual points
contained in such cells. When $1\le j\le d-2$, $\sigma$ spans (affinely) a $j$-flat $g$,
which cannot be contained in more than $\ell$ dual hyperplanes in $P^*$, for otherwise 
any line in $g$ would also be contained in these hyperplanes, contrary to our assumption.
Hence the number of resulting incidences is at most $\ell m''$, where $m''$ is the number 
of dual points contained in such simplices. In total, all the lower-dimensional simplices 
contribute at most $\ell m$ incidences.
 
Hence, combining this with (\ref{eq:interior_incid:ddim}), we get:
\begin{equation}
mk \leq I(P,H) \leq O\left( m\ell^{1/2} n^{1/2}r^{-1/(2d)} + r^{1-1/d}n \right) + \ell m . \label{eq:incid:ddim}
\end{equation}
We now balance the first two terms by choosing
\[
r:= \left( \frac{\ell m^2}{n}\right)^{d/(2d-1)}.
\]
For this to make sense $r$ has to be between $1$ and $m/4$. We note that $r < 1$
when $m < (n/\ell)^{1/2}$ and $r > m/4$ when $m > c_1n^d/\ell^d$, for some constant $c_1$
that depends on $d$. In the former case we take $r=1$ and the first two terms become $O(n)$. 
(Note that the choice $r=1$ corresponds to a direct application of Lemma~\ref{l:simple_bound}.)
In the latter case we take $r=m/4$ and the first two terms become 
\[
O(m^{(2d-1)/(2d)}\ell^{1/2} n^{1/2} + m^{1-1/d}n) = O\left( m^{(2d-1)/(2d)}\ell^{1/2} n^{1/2} \right) = O(m\ell),
\]
where both inequalities hold because $m > c_1n^d/\ell^d$.
When neither of these two extreme cases occurs, 
the first two terms become $O(m^{(2d-2)/(2d-1)}\ell^{(d-1)/(2d-1)}n^{d/(2d-1)})$. Altogether we thus get
\begin{equation}
mk \leq O\left( m^{(2d-2)/(2d-1)}\ell^{(d-1)/(2d-1)}n^{d/(2d-1)} + m\ell + n \right) . \label{eq:incidx:ddim}
\end{equation}
The inequality in (\ref{eq:incidx:ddim}) implies that either 
$\ell = \Omega(k) = \Omega\left(k^{(2d-1-\alpha)/(d-1)}\right)$, since $\alpha \ge d$, or
\begin{equation} \label{eq:mup:ddim}
m = O\left( \frac{\ell^{d-1} n^d}{k^{2d-1}} + \frac{n}{k} \right),
\end{equation}
where we have distinguished two cases depending on whether the first or the last 
term in the right-hand side of  (\ref{eq:incidx:ddim}) dominates.
Let $c'$ be the O-notation constant from (\ref{eq:mup:ddim}).
Since we assume that $m \ge c(n^d/k^\alpha + n/k)$, where $c$ is a sufficicently large constant, we get

\[
 c\left (\frac{n^d}{k^\alpha} + \frac{n}{k}\right) \le c'\left( \frac{\ell^{d-1} n^d}{k^{2d-1}} + \frac{n}{k} \right).
\]
For $c \geq c'$ it simplifies to
\[
  \frac{cn^d}{k^\alpha} \le \frac{cn^d}{k^\alpha} + (c-c')\frac{n}{k} \le c'\frac{\ell^{d-1} n^d}{k^{2d-1}},
\]
which implies that $\ell = \Omega(k^{(2d-1-\alpha)/(d-1)})$.
This completes the proof of Theorem~\ref{thm:main}.
\end{proof}

\subsection{Upper bound constructions} \label{sec:up}

\paragraph*{First construction}
The following construction only handles the case $\alpha = d+1$ (the original question 
of Afshani) and certain restricted values of $k$; it is a variation of a construction of Elekes~\cite{Ele}.
  
Fix two integer parameters $u > v\ge 1$ where $v$ is a suitable constant. 
Let $P$ be the set of vertices of the $u\times \cdots \times u\times duv$ integer grid in $\reals^d$. That is,
\[
P = \{(i_1,\ldots, i_d) \mid 0\leq i_1, \ldots, i_{d-1} \leq u-1,\;0\leq i_d \leq duv-1\}.
\]
We have $n := |P| = du^dv$ and we set $k := u^{d-1}$. Any hyperplane of the form 
$x_d = a_1x_1 + a_2x_2 + \cdots + a_{d-1}x_{d-1} + a_d$, with integer coefficients 
satisfying $0\le a_i \le v-1$, for $1\le i \le d-1$, and $0\le a_d \le uv-1$, is trivially seen 
to be $k$-rich with respect to $P$. 
Hence the number of $k$-rich hyperplanes is at least $uv^d$. On the other hand, we have
\begin{equation*}
\frac{n^d}{k^{d+1}} = \frac{d^du^{d^2}v^d}{u^{(d+1)(d-1)}} = d^duv^d. \label{eq:rich_hyperplanes}
\end{equation*}
It is easily verified that a $(d-2)$-flat $\lambda$ that is not vertical (i.e., not parallel to the 
$x_d$-axis) contains at most $u^{d-2}$ points of $P$, and that a vertical $(d-2)$-flat can contain 
$u^{d-3}duv = O(u^{d-2}) = O(k^{(d-2)/(d-1)})$ points of $P$ (but not more).
Hence, setting $\ell$ to be $ck^{(d-2)/(d-1)}$, for a suitable coefficient $c$,
we have a construction with at least $\frac{n^d}{d^dk^{d+1}}$ $k$-rich hyperplanes,
but no $(d-2)$-flat contains more than $ck^{(d-2)/(d-1)}$ points of $P$. In other words, 
our bound is asymptotically worst-case tight for this special setup.

We remark that in this construction we have $k = \Theta(n^{1-1/d})$, 
so one still needs to show that the bound is tight for other values of $k$. 
We leave this as an open problem.

\paragraph*{Second construction}
 
A more significant open challenge is to extend the construc-tion to other values
of $\alpha$ in the range $d \leq \alpha < 2d-1$.
We make a first step towards this goal, by presenting, for $\alpha = d = 3$,
another simple construction. Let $k \geq 3$, $k \geq u \geq 2$ be integer parameters.
Consider a set $L$ of $u$ pairwise skew lines in $\reals^3$, each containing 
$k$ distinguished points. Let $P$ be the set of all these points. We have 
$n:= |P| = ku$. Note that there are infinitely many $k$-rich planes with respect 
to $P$ as any plane containing a single line from $L$ is $k$-rich. On the other hand, 
it follows from the construction that no line contains strictly more than $k$ points 
of $P$. Indeed, any line not contained in $L$ intersects at most $k$ lines from $L$ 
(since $u \leq k$), so it can contain at most $k$ points of $P$. Hence, $\ell = k$,
which shows that the bound in Theorem~\ref{thm:main} is tight for $d= \alpha = 3$ 
and $n/2 \geq k \geq n^{1/2}$.

\section{The case of spheres}

The analysis can be extended to the case of spheres in a straightforward manner. 
Specifically, we have a set $P$ of
$n$ points in~$\reals^d$, for $d\ge 3$. We say that a sphere $\sigma$ is $k$-rich with
respect to $P$ if it contains at least $k$ points of $P$. The goal now is to show that if
there are many $k$-rich $(d-1)$-spheres in $\reals^d$ then there exists a $(d-2)$-sphere that contains many points of~$P$.
The concrete statement is: 
\begin{theorem} \label{thm:spheres}
Let $d \ge 3, k \ge d+1$ be integers, and let $d+1\le \alpha < 2d+1$ be a parameter. 
Let $P$ be a set of $n$ points in~$\reals^d$, for which the number of 
$k$-rich $(d-1)$-spheres is at least $c(n^{d+1}/k^{\alpha} + n/k)$, for some sufficiently 
large constant $c$ that depends on $d$. Then there exists a $(d-2)$-sphere 
that contains $\Omega\left(k^{(2d+1-\alpha)/d}\right)$ points of $P$.
\end{theorem}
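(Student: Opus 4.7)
The plan is to reduce the spherical statement in $\reals^d$ to the hyperplane statement of Theorem~\ref{thm:main} applied in $\reals^{d+1}$, via the classical paraboloid lifting. Define $\pi : \reals^d \to \reals^{d+1}$ by $\pi(x) = (x_1, \ldots, x_d,\; x_1^2 + \cdots + x_d^2)$, let $P^* := \pi(P)$, and let $\Pi \subset \reals^{d+1}$ be the image paraboloid. A standard computation shows that every $(d-1)$-sphere $S \subset \reals^d$ equals $\pi^{-1}(\Pi \cap H_S)$ for a unique non-vertical hyperplane $H_S \subset \reals^{d+1}$, with the further property that $p \in S$ if and only if $\pi(p) \in H_S$. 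Consequently, every $k$-rich $(d-1)$-sphere with respect to $P$ yields a distinct $k$-rich hyperplane with respect to $P^*$.

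By hypothesis, the number of $k$-rich hyperplanes with respect to $P^*$ in $\reals^{d+1}$ is therefore at least $c(n^{d+1}/k^\alpha + n/k)$. Setting $d' := d+1$, the parameter ranges $k \ge d+1$ and $d+1 \le \alpha < 2d+1$ translate precisely to $k \ge d'$ and $d' \le \alpha < 2d'-1$, so Theorem~\ref{thm:main} applies in ambient dimension $d'$ and produces a $(d'-2) = (d-1)$-flat $F \subset \reals^{d+1}$ containing $\Omega\bigl(k^{(2d'-1-\alpha)/(d'-1)}\bigr) = \Omega\bigl(k^{(2d+1-\alpha)/d}\bigr)$ points of $P^*$.

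It remains to translate $F$ back down to $\reals^d$. Since $\Pi$ is the graph of a strictly convex function it contains no line, so $F \cap \Pi$ is a $(d-2)$-dimensional algebraic set onto which the guaranteed points of $P^*$ all lie. Writing $F = H_1 \cap H_2$ as an intersection of two hyperplanes, each $H_i$ is either non-vertical, in which case its preimage on $\Pi$ corresponds to a $(d-1)$-sphere in $\reals^d$, or vertical, in which case its preimage corresponds to a hyperplane of $\reals^d$. In every case, the set $\pi^{-1}(F \cap \Pi)$ is the intersection of two such objects, which generically is a $(d-2)$-sphere, and in the degenerate case when both $H_i$ are vertical is a $(d-2)$-flat (a sphere of infinite radius). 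Thus the required $(d-2)$-sphere is produced, and it contains the claimed number of points of $P$.

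The main obstacle is this last geometric identification: one has to account for the possibility that one or both of the hyperplanes defining $F$ is vertical, and one must either adopt the convention that $(d-2)$-flats count as degenerate $(d-2)$-spheres, or else argue separately that the vertical case can be absorbed into or converted to a genuine sphere. Apart from this bookkeeping, no new incidence machinery is needed; the simplicial-partition/Kővári--Sós--Turán argument of Section~\ref{sec:proof} is reused verbatim through a single application of Theorem~\ref{thm:main} in one higher dimension, and the threshold and exponent in the conclusion emerge by simply substituting $d \mapsto d+1$ into its statement.
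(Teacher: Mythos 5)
Your approach is essentially the paper's: both reduce the spherical statement to the hyperplane setting in $\reals^{d+1}$ via the paraboloid lifting, and all your bookkeeping (the parameter ranges $k\ge d'$, $d'\le\alpha<2d'-1$ for $d'=d+1$, and the exponent $(2d'-1-\alpha)/(d'-1)=(2d+1-\alpha)/d$) is correct. The one organizational difference is that you invoke Theorem~\ref{thm:main} as a black box and then must convert the resulting $(d-1)$-flat $F\subset\reals^{d+1}$ back into a $(d-2)$-sphere, whereas the paper lifts, dualizes, and reruns the incidence analysis of Section~\ref{sec:proof} verbatim with $\ell$ redefined from the outset as the maximum number of points of $P$ on a $(d-2)$-sphere, so that the conclusion $\ell=\Omega(k^{(2d+1-\alpha)/d})$ needs no back-conversion. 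The degenerate case you flag --- $F$ containing the vertical direction, so that $F\cap\Pi$ projects to a $(d-2)$-flat rather than a $(d-2)$-sphere --- is a genuine point, and you are right that collinear-type configurations on a $(d-2)$-flat need not lie on any $(d-2)$-sphere; but note that the paper's route faces exactly the same issue in dual disguise, namely in the assertion that ``no $(d-2)$-sphere contains more than $\ell$ points of $P$'' translates to ``no line is contained in more than $\ell$ hyperplanes of $P^*$'' (a line in the dual corresponds to a $(d-1)$-flat upstairs, which may be vertical). So your version is not weaker than the paper's; it just makes explicit a degeneracy that the paper elides, and which is standardly resolved by treating flats as spheres through infinity or by a short separate argument. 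With that convention stated, your proof is complete.
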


Note that if there is a $k$-rich $(d-2)$-sphere, the theorem holds trivially,
as we then have $\ell \geq k$ and $\alpha \geq d+1$. Hence we can assume that no $(d-2)$-sphere 
is $k$-rich, which implies, as in the case of hyperplanes, that the number of $k$-rich $(d-1)$-spheres is finite.

The proof is an adaptation of the preceding analysis.
Let $P$ be a set of $n$ points in~$\reals^d$, for $d\ge 3$, that has many $k$-rich 
$(d-1)$-spheres, in the sense of Theorem~\ref{thm:spheres}. Let $\ell$ denote the maximum 
number of points of $P$ contained in any $(d-2)$-sphere. As before, we seek a lower bound on~$\ell$.

Lemma~\ref{l:simple_bound} continues to 
hold in the case of spheres, with more or less the same proof, using the obvious property that
two non-disjoint $(d-1)$-spheres intersect in a $(d-2)$-sphere or a single point. 
To sharpen the bound we proceed as follows.

Let $\Sigma$ be the set of all $k$-rich $(d-1)$-spheres, $k \geq d+1$, and recall that we assume that
$m := |\Sigma| \ge c\left( n^{d+1}/k^\alpha + n/k \right)$, 
for some sufficiently large constant $c$ (that depends on $d$) and for $d+1\le \alpha < 2d+1$.

We apply the standard lifting transform $(x_1,\ldots, x_d) \mapsto (x_1,\ldots, x_d, x_1^2 + \cdots + x_d^2)$,
which transforms $(d-1)$-spheres in $\reals^d$ to hyperplanes in $\reals^{d+1}$. 
Applying standard duality in $\reals^{d+1}$, we get
a set $\Sigma^*$ of $m$ dual points and a set $P^*$ of $n$ dual hyperplanes in $\reals^{d+1}$.
The lifted-dual version of the fact that no $(d-2)$-sphere, which is lifted to a $(d-1)$-flat in
$\reals^{d+1}$, contains more than $\ell$ points of $P$ is that no line is contained in more 
than $\ell$ hyperplanes of $P^*$. As in the case of rich hyperplanes, we also know that 
$I(P,\Sigma) \geq mk$.

In other words, after this transform we reach the same problem involving points and hyperplanes
in $\reals^{d+1}$, and we can apply the preceding analysis verbatim with $d+1$ replacing $d$, and
obtain the assertion in Theorem~\ref{thm:spheres}.

\section{Discussion}

The problem studied in this work can be considered as a variant in the study of incidences between
points and hyperplanes. As far as we can tell, the results in the previous works that have studied
such problems (e.g., \cite{ApS,BK}) do not imply our results. 

Several open problems arise. For example, are there variants of our assumptions, in $d\ge 4$
dimensions, that imply the existence of an even lower-dimensional flat that contain many points of~$P$?
This does not hold without any further assumptions, because we can place the points of $P$ in
\emph{general position} in some $(d-2)$-flat $g$, and then there are infinitely many $k$-rich 
hyperplanes, for any $k$ (all hyperplanes that contain $g$), but no $(d-3)$-flat contains more 
than $d-2$ points of $P$.


However, by a standard projection argument our main theorem can be extended to lower-dimensional flats in the following manner.

\begin{theorem}\label{t:flats}
 Let $d \geq t \geq 3, k \geq t$ be integers, and $t\leq\alpha < 2t-1$.
Let $P$ be a set of $n$ points in~$\reals^d$, for which the number of 
$k$-rich $(t-1)$-flats is at least $c(n^t/k^{\alpha} + n/k)$, for some sufficiently 
large constant $c$ (depending only on $t$). Then there exists a $(t-2)$-flat 
that contains $\Omega\left(k^{(2t-1-\alpha)/(t-1)}\right)$ points of $P$.
\end{theorem}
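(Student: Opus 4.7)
The plan is to apply a generic linear projection $\pi : \reals^d \to \reals^t$ in order to reduce Theorem~\ref{t:flats} to Theorem~\ref{thm:main} in the target space~$\reals^t$. As a first step, I would repeat the opening observation from the proof of Theorem~\ref{thm:main}: we may assume that no $(t-2)$-flat of $\reals^d$ contains $k$ or more points of~$P$, for otherwise the conclusion is immediate since $\alpha \ge t$ forces $k^{(2t-1-\alpha)/(t-1)} \le k$. Under this assumption, each $k$-rich $(t-1)$-flat is affinely spanned by any $t$ of its affinely independent points, so the set of $k$-rich $(t-1)$-flats is finite.

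Next, I would choose $\pi$ to satisfy the following three generic properties, each of which fails only on a proper algebraic (hence measure-zero) subset of the space of linear maps $\reals^d \to \reals^t$: (i)~$\pi$ is injective on~$P$; (ii)~the assignment $F \mapsto \pi(F)$ sends the finite family of $k$-rich $(t-1)$-flats in $\reals^d$ injectively to a family of hyperplanes in $\reals^t$; and (iii)~for every subset $S \subseteq P$ with affine dimension $j$ in $\reals^d$, the affine dimension of $\pi(S)$ in $\reals^t$ equals $\min(j,\,t)$. In particular, (iii) implies that $S$ lies on some $(t-2)$-flat in $\reals^d$ if and only if $\pi(S)$ lies on some $(t-2)$-flat in $\reals^t$. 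Since $P$ and the set of $k$-rich $(t-1)$-flats are both finite, only finitely many bad conditions must be avoided, and a generic~$\pi$ satisfies all three.

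With such $\pi$ fixed, set $P' := \pi(P)$; this is a set of $n$ points in $\reals^t$ admitting at least $c(n^t/k^\alpha + n/k)$ distinct $k$-rich hyperplanes, namely the images $\pi(F)$ (each trivially contains $\ge k$ points of $P'$). Applying Theorem~\ref{thm:main} in $\reals^t$, which is permitted because $t \ge 3$ and $t \le \alpha < 2t-1$, yields a $(t-2)$-flat $g' \subset \reals^t$ containing $\Omega\!\left(k^{(2t-1-\alpha)/(t-1)}\right)$ points of~$P'$. Let $Q := P' \cap g'$ and let $\widetilde Q \subseteq P$ be the set of points mapped to $Q$ by $\pi$; by~(i), $|\widetilde Q| = |Q|$, and by~(iii) the set $\widetilde Q$ lies on some $(t-2)$-flat of $\reals^d$, which is the flat promised by the theorem.

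The crux, and the main obstacle, is property~(iii): a direct argument via the preimage $\pi^{-1}(g')$ only produces a $(d-2)$-flat in $\reals^d$, so this dimension-preservation statement is really what drives the reduction. Its justification is standard: for any fixed affine subspace $V \subseteq \reals^d$ of dimension $j$, the image $\pi(V)$ has dimension strictly less than $\min(j,\,t)$ exactly when $\ker \pi$ meets the linear direction of $V$ in an excess dimension, which is a proper algebraic (hence measure-zero) condition on $\ker \pi \in \mathrm{Gr}(d-t,\,d)$. Taking the union over the (finitely many) affine spans of subsets of $P$ still leaves a measure-zero set of forbidden projections, so a generic~$\pi$ realises~(iii) for every subset of $P$ simultaneously; properties~(i) and (ii) are similar and simpler generic-position statements.
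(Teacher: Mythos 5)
Your proposal is correct and follows essentially the same route as the paper: reduce to the case where no $(t-2)$-flat is $k$-rich (so the rich $(t-1)$-flats form a finite, $P$-spanned family), project by a generic map onto $\reals^t$ that separates the points of $P$ and keeps these flats full-dimensional, apply Theorem~\ref{thm:main} in $\reals^t$, and pull the resulting $(t-2)$-flat back, using genericity to argue the preimage points cannot affinely span a $(t-1)$-flat. If anything, your condition~(ii) --- injectivity of $F \mapsto \pi(F)$ on the family of rich flats --- makes explicit a point the paper leaves implicit, namely that distinct rich flats must project to distinct rich hyperplanes for the count to survive the projection.
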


\begin{proof}
For $t=d$ the theorem reduces to Theorem \ref{thm:main}.
So we assume that $t < d$. If there exists a $(t-2)$-flat containing at least $k$ points of $P$, the theorem trivially holds, so we can assume that each $(t-2)$-flat contains at most $k-1$ points of $P$. As $k \geq t$, it follows that every $k$-rich $(t-1)$-flat contains $t$ affinely independent points from $P$.

Let $V$ be the set of all $(t-1)$-flats in $\reals^d$ spanned by $t$ affinely independent points from $P$. Note that the cardinality of $V$ is finite and that the set of all $k$-rich $(t-1)$-flats is contained in $V$. 
We consider an orthogonal projection $\varphi$ onto a generic $\reals^t$, where generic means that no two points of $P$ are identified and that for any $F \in V$, $\varphi(F)$ is a $(t-1)$-flat.

After the projection, we have a set $\varphi(P)$ of $n$ points in~$\reals^t$ and the number of $k$-rich $(t-1)$-flats is still at least $c(n^t/k^{\alpha} + n/k)$, as a projection of a $k$-rich $(t-1)$-flat remains to be a $k$-rich $(t-1)$-flat. So we apply Theorem \ref{thm:main} to $\varphi(P)$ in $\reals^t$ and obtain a $(t-2)$-flat $G$ in $\reals^t$ that contains $\Omega\left(k^{(2t-1-\alpha)/(t-1)}\right)$ points of $\varphi(P)$. The preimages of these points are distinct, and we claim, moreover, that they lie on a $(t-2)$-flat in~$\reals^d$, which finishes the proof. 
Indeed, denote by $P'$ the intersection of $P$ with  $\varphi^{-1}(G)$, and assume to the contrary that $P'$ does not lie on a $(t-2)$-flat in $\reals^d$. Hence, there are $t$ affinely independent points in~$P'$, which span a $(t-1)$-flat $F'$ in $\reals^d$. It follows that $\varphi(F') \subseteq G$, which is a  contradiction with our generic choice of $\varphi$ as $F' \in V$ is mapped to a $(t-2)$-flat $G$.
\end{proof}


Other problems, already mentioned earlier, are to obtain upper bound constructions, other than
the one in Section~\ref{sec:up}, for other values of $k$ and of $\alpha$.

\section*{Acknowledgements}
The authors thank Peyman Afshani for sharing his thoughts with us concerning this problem.

\end{document}